\numberwithin{equation}{section}
\newtheorem{Theorem}{Theorem}[section]
\newtheorem{Lemma}[Theorem]{Lemma}
\theoremstyle{definition}
\newcommand{\db}{\overline\partial}
\newcommand{\wi}{\widetilde}
\DeclareMathOperator{\ric}{Ric}
\DeclareMathOperator{\codim}{codim}
\DeclareMathOperator{\inte}{int}
\newcommand{\cali}[1]{\mathscr{#1}}
\newcommand{\cO}{\cali{O}}
\newcommand{\cT}{\cali{T}}
\newcommand{\cC}{\cali{C}}
\newcommand{\field}[1]{\mathbb{#1}}
\newcommand{\C}{\field{C}}
\newcommand{\N}{\field{N}}
\newcommand{\reg}{\mathrm{reg}}
\newcommand{\sing}{\mathrm{sing}}
\newcommand{\FS}{\mathrm{FS}}
\newcommand{\PSH}{\mathrm{PSH}}
\newcommand{\comment}[1]{}
\begin{document}

\title[Tian's theorem for Moishezon spaces]{Tian's theorem for Moishezon spaces}

\author{Dan Coman}
\thanks{D.\ Coman is supported by the NSF Grant DMS-2154273}
\address{Department of Mathematics, Syracuse University, 
Syracuse, NY 13244-1150, USA}
\email{dcoman@syr.edu}

\author{Xiaonan Ma}
\address{Universit\'e Paris Cit\'e, CNRS, IMJ-PRG, B\^atiment Sophie Germain, 
UFR de Math\'ematiques, 
\newline\mbox{\quad}\,Case 7012, 75205 Paris Cedex 13, France}
\email{xiaonan.ma@imj-prg.fr}
\thanks{X.\ Ma is partially supported by NSFC No.11829102, ANR-21-CE40-0016 and
funded through the Institutional Strategy of the University of Cologne within the German 
Excellence Initiative}

\author{George Marinescu}
\address{Univerisit\"at zu K\"oln, Mathematisches institut,
Weyertal 86-90, 50931 K\"oln, Germany 
\newline\mbox{\quad}\,Institute of Mathematics `Simion Stoilow', 
Romanian Academy, Bucharest, Romania}
\email{gmarines@math.uni-koeln.de}
\thanks{G.\ Marinescu is partially supported
by the DFG funded project
CRC TRR 191 \textquoteleft Symplectic Structures in Geometry, Algebra
and Dynamics\textquoteright{} (Project-ID 281071066-TRR 191),
the DFG Priority Program 2265 \textquoteleft Random
Geometric Systems\textquoteright{} (Project-ID 422743078) and the
ANR-DFG project QuaSiDy (Project-ID 490843120).}

\subjclass[2010]{Primary 32L10; 
Secondary 32A60, 32C20, 32U05, 32U40}
\keywords{Bergman kernel function, Fubini-Study current, 
singular Hermitian metric, normal complex space, Moishezon space}
\date{December 16, 2024}

\pagestyle{myheadings}

\begin{abstract} 
We prove that the Fubini-Study currents associated to a sequence 
of singular Hermitian holomorphic line bundles on a compact normal Moishezon space
distribute asymptotically as the curvature currents of their metrics.

\end{abstract}

\maketitle
\tableofcontents

\section{Introduction}\label{S:intro}

Let $(L,h)$ be a positive Hermitian holomorphic line bundle on a projective manifold $X$
and set $(L^p,h^p)=(L^{\otimes p},h^{\otimes p})$. Kodaira's embedding theorem states that 
for all $p$ sufficiently large, the Kodaira map $\Phi_p:X\to\mathbb P\big(H^0(X,L^p)^\star\big)$ 
associated to $(L^p,h^p)$ is an embedding. Hence one can consider the Fubini-Study forms on $X$, 
$\gamma_p=\Phi_p^\star(\omega_\FS)$, where $\omega_\FS$ denotes the Fubini-Study form 
on a projective space. A celebrated theorem of Tian \cite{Ti90} shows that 
$\frac{1}{p}\,\gamma_p\to c_1(L,h)$ as $p\to\infty$, in the $\cC^\infty$ topology on $X$ (see also \cite{Ru98}). 
Tian's theorem follows from the first term asymptotics of the Bergman kernel function associated to the space 
$H^0(X,L^p)$ endowed with the inner product determined by $h^p$ and a volume form on $X$. 
We refer to the book \cite{MM07} for an exposition of these topics as well as for the full asymptotic expansion 
of the Bergman kernel in different contexts.

In \cite{CM15} we extended Tian's theorem to the case when
$(L,h)$ is a singular Hermitian holomorphic line bundle with strictly 
positive curvature current on a compact K\"ahler manifold $X$, 
the above convergence now being in the weak sense of currents. 
Later, we extended Tian's theorem further to general classes of 
compact K\"ahler spaces $X$ \cite{CM13,CMM17}. 
In all these situations one has to replace the space $H^0(X,L^p)$ 
with the Bergman space $H^0_{(2)}(X,L^p)$ of square integrable 
holomorphic sections.

In \cite[Theorem 1.1]{CMM17} we generalized Tian's theorem 
by considering sequences $(L_p,h_p)$, $p\geq1$, 
of singular Hermitian holomorphic line bundles over 
a compact normal K\"ahler space $X$, in place of the 
sequence of powers $(L^p,h^p)$ of a line bundle $(L,h)$. 
Assuming that the curvature currents $c_1(L_p,h_p)$ satisfy 
a natural growth condition, we proved that the  Fubini-Study currents 
$\gamma_p$ associated to the Bergman spaces $H^0_{(2)}(X,L_p)$ 
(see \eqref{e:BFS1}) distribute asymptotically like $c_1(L_p,h_p)$.

The purpose of this note is to show that the preceding
result holds more generally for compact normal spaces $X$ 
which are not assumed to be K\"ahler. 
The precise setting is the following: 

\smallskip

(A) $X$ is a compact, reduced, irreducible, normal complex space of dimension $n$, 
$X_\reg$ denotes the set of regular points of $X$, $X_\sing$ denotes the set 
of singular points of $X$, and $\omega$ is a Hermitian form on $X$.

\smallskip

(B) $(L_p,h_p)$, $p\geq1$, is a sequence of holomorphic line 
bundles on $X$ with singular Hermitian metrics $h_p$ whose 
curvature currents verify 
\begin{equation}\label{e:pc}
c_1(L_p,h_p)\geq a_p\,\omega \, \text{ on $X$, where 
$a_p>0$ and } \lim_{p\to\infty}a_p=\infty.
\end{equation}
We let $A_p=\int_Xc_1(L_p,h_p)\wedge\omega^{n-1}$ and assume that 
\begin{equation}\label{e:domin0}
\exists\,T_0\in\cT(X) \text{ such that } 
c_1(L_p,h_p)\leq A_pT_0\,,\;\forall\,p\geq1\,.
\end{equation}
Condition (B) implies that $L_p$
are big line bundles, hence $X$ is a Moishezon space. 

\smallskip

Let $d^c:= \frac{1}{2\pi i}\,(\partial -\overline\partial)$, so 
$dd^c=\frac{i}{\pi}\,\partial\overline\partial$. We consider currents on $X$
in the sense of \cite{D85}, and denote by $\cT(X)$ the set of positive closed currents of 
bidegree $(1,1)$ on $X$ which have local plurisubharmonic (psh) potentials, i.e., 
$T=dd^cv$ holds in a neighborhood of each point of $X$ for some psh function $v$. 
We refer to \cite[Section 2.1]{CMM17}) for a review of the notions of differential forms, 
psh functions and currents on complex spaces. We denote by $\PSH(U)$ the set of psh 
functions on an open set $U\subset X$. The notions of singular Hermitian metric on a line 
bundle over a complex space $X$, and its curvature current, are defined as in the case 
when $X$ is smooth (see \cite{D90}, \cite[Section 2.2]{CMM17}).

\smallskip

Let $H^0_{(2)}(X,L_p)$ be the Bergman space 
of $L^2$-holomorphic sections of $L_p$ relative to the metric 
$h_p$ and the volume form induced by $\omega$ on $X$, 
\begin{equation}\label{e:bs}
H^0_{(2)}(X,L_p)=H^0_{(2)}(X,L_p,h_p,\omega^n)=
\Big\{S\in H^0(X,L_p):\,\|S\|_p^2:=\int_X|S|^2_{h_p}\,\frac{\omega^n}{n!}<\infty\Big\},
\end{equation}
endowed with the obvious inner product. Let $P_p,\gamma_p$ be the 
Bergman kernel function and the Fubini-Study current of the space $H^0_{(2)}(X,L_p)$.
They are defined as follows. Let $S_1^p,\dots,S_{d_p}^p$ be an orthonormal 
basis of $H^0_{(2)}(X,L_p)$. If $x\in X$ let $e_p$ be a 
holomorphic frame of $L_p$ on a neighborhood $U_p$ of $x$ 
and write $S_j^p=s_j^pe_p$ with $s_j^p\in\cO_X(U_p)$. Then 
\begin{equation}\label{e:BFS1}
P_p(x)=\sum_{j=1}^{d_p}|S^p_j(x)|_{h_p}^2\,,\,\;
\gamma_p\vert_{U_p}=\frac{1}{2}\,dd^c
\log\left(\sum_{j=1}^{d_p}|s_j^p|^2\right).
\end{equation}
We have that $P_p,\,\gamma_p$ are independent of the choice 
of basis. Moreover, $\gamma_p=\Phi_p^\star(\omega_\FS)$, where 
$\Phi_p:X\dashrightarrow\mathbb P\big(H^0_{(2)}(X,L_p)^\star\big)$ is the (meromorphic)
Kodaira map associated to the Bergman space $H^0_{(2)}(X,L_p)$.

Our main result is the following theorem:

\begin{Theorem}\label{T:mt1} 
Assume that $X,\omega,(L_p,h_p),\,p\geq1$, satisfy 
conditions (A)-(B). Then the following hold:

\smallskip

\par (i) $\frac{1}{A_p}\log P_p\to 0$ as $p\to\infty$,
in $L^1(X,\omega^n)$. 

\smallskip

\par (ii) $\frac{1}{A_p}\,(\gamma_p-c_1(L_p,h_p))\to 0$ as 
$p\to\infty$, in the weak sense of currents on $X$.

\comment{
\smallskip

Moreover, if $\frac{1}{A_p}\,c_1(L_p,h_p)\to T$ for some positive closed current $T$ of bidegree $(1,1)$ 
on $X$, then $\frac{1}{A_p}\,\gamma_p\to T$ as $p\to\infty$, in the weak sense of currents on $X$.
}
\end{Theorem}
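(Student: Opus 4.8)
The strategy is to reduce everything to a localized lower and upper bound on the Bergman kernel function $P_p$, and then extract the convergence of currents by standard pluripotential-theoretic compactness. The key estimate is a two-sided control: on one hand, a lower bound of the form $\frac{1}{A_p}\log P_p \geq -\varepsilon_p$ away from a small set, with $\varepsilon_p\to 0$, obtained by constructing, near a fixed point $x_0\in X_\reg$, a holomorphic $L^2$ section of $L_p$ whose mass is concentrated near $x_0$; this is where the growth hypothesis $a_p\to\infty$ in \eqref{e:pc} enters, since the positivity $c_1(L_p,h_p)\geq a_p\omega$ lets one solve a $\db$-equation with a weight that forces peak sections to exist (the Ohsawa--Takegoshi / Demailly-type $L^2$ extension on the normal space $X$, applied on a Stein neighborhood of $x_0$ in $X_\reg$ and then globalized using that $X$ is compact and the $L^2$ estimates hold across $X_\sing$ because of normality). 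On the other hand, the upper bound $\frac{1}{A_p}\log P_p \leq \frac{1}{A_p}\log\big(C_p\cdot(\text{volume factor})\big)$ comes from the submean-value inequality for $|S|^2_{h_p}$, combined with the domination hypothesis \eqref{e:domin0}, which bounds the local psh potentials of $c_1(L_p,h_p)$ in terms of $A_p T_0$ and thereby keeps $\frac{1}{A_p}\log P_p$ bounded above in $L^1$.

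Concretely, I would proceed as follows. First, fix local holomorphic frames $e_p$ of $L_p$ on a finite cover $\{U_j\}$ of $X$ and write $h_p = e^{-2\varphi_p}$ on $U_j$, so that $c_1(L_p,h_p) = dd^c\varphi_p$ there and, by \eqref{e:domin0}, $\varphi_p \leq A_p u_j + O(1)$ for a fixed potential $u_j$ of $T_0$. Then $\gamma_p|_{U_j} = dd^c\big(\varphi_p + \tfrac12\log P_p\big)$, so part (ii) is equivalent to $\frac{1}{A_p}\log P_p\to 0$ in $L^1_{loc}$, which is exactly part (i); hence it suffices to prove (i). For the $L^1$-convergence in (i), I would show separately that $\limsup_p \frac{1}{A_p}\log P_p \leq 0$ pointwise a.e. (submean value + domination) and that $\int_X \frac{1}{A_p}\log P_p\,\frac{\omega^n}{n!} \geq -\varepsilon_p$ with $\varepsilon_p\to 0$ (the peak-section construction, integrated), and then conclude by a standard argument that an a.e.-bounded-above sequence of functions whose integrals tend to the right value converges in $L^1$ — here one uses that the functions $\frac{1}{A_p}\log P_p + \text{(const)}$ are, locally, of the form psh minus the bounded-above potential $\frac{1}{A_p}\varphi_p$, so Hartogs-type compactness for psh functions applies on $X_\reg$, and the singular set is negligible because $X$ is normal (psh functions extend, and $\cT(X)$-currents put no mass on $X_\sing$).

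The main technical obstacle is the peak-section existence on the \emph{normal, non-K\"ahler} space $X$: one must produce, for each $p$ and each point in a set of nearly full measure, a global holomorphic section $S\in H^0_{(2)}(X,L_p)$ with $|S(x)|^2_{h_p}/\|S\|_p^2$ bounded below by $e^{-\varepsilon_p A_p}$. Without the K\"ahler assumption there is no global $\db$-theory with the usual Bochner--Kodaira estimate, so the argument has to be localized: work on a small Stein coordinate ball $B\subset X_\reg$ around $x_0$, use the strict positivity $c_1(L_p,h_p)\geq a_p\omega$ to get an $L^2$-minimal solution of $\db$ on $B$ with a weight like $2n\chi\log|z-z_0| + \varphi_p$ (cut off by $\chi$), and then the crucial point is to control the global $L^2$-norm of the resulting section over all of $X$, not just $B$; this is handled by noting that outside $B$ the section equals the correction term $\db u$ whose norm is controlled by the local estimate, and that the hypothesis \eqref{e:domin0} together with $a_p\to\infty$ makes the contribution of the correction exponentially small relative to the peak value. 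I expect this globalization step — matching the local Ohsawa--Takegoshi construction on $X_\reg$ with a uniform global $L^2$ bound across $X_\sing$ using only normality and compactness — to be the delicate heart of the proof; the rest is bookkeeping with the compactness properties of psh functions and currents on normal complex spaces as set up in \cite{CMM17}.
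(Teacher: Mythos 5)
There is a genuine gap, and it sits exactly at what you yourself call the ``delicate heart'': the globalization of the peak-section construction. Solving the $\db$-equation $L^2$-minimally on a small Stein ball $B\subset X_\reg$ produces a holomorphic section of $L_p$ over $B$ only; the claim that ``outside $B$ the section equals the correction term $\db u$'' is not meaningful, since to obtain an element of $H^0_{(2)}(X,L_p)$ you must solve $\db u=\db(\chi e_p)$ on all of $X$ (minus a negligible set) with a global $L^2$ estimate. Such an estimate requires a H\"ormander/Demailly-type theorem, hence a (complete) K\"ahler metric with curvature control relative to which $c_1(L_p,h_p)$ is strictly positive --- precisely what a non-K\"ahler Moishezon space does not supply, and normality does not give ``$L^2$ estimates across $X_\sing$''; it only allows extension of holomorphic sections across analytic sets of codimension $\geq2$ \emph{after} the section has been produced elsewhere. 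This is where the paper's proof is structurally different and where its new ingredients enter: by Moishezon's theorem one passes to a \emph{projective} desingularization $\pi:\wi X\to X$ (Theorem \ref{T:Moi}), twists $\pi^\star h_p$ by a metric on $\cO_{\wi X}(-\Sigma)$ (Lemma \ref{L:thp}) at a cost $b_p$ with $b_p/A_p\to0$ so as to recover strict positivity with respect to a K\"ahler form on $\wi X\setminus\Sigma$, applies Ohsawa--Takegoshi and Demailly's estimate (Theorem \ref{T:db}) on the \emph{complete K\"ahler} manifold $\wi X\setminus\Sigma$, and only then descends to $X\setminus Y$ and extends across $Y$ using normality and $\codim Y\geq2$. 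None of these steps (projectivity of $\wi X$, the twisting with controlled loss $\alpha^{b_p}$, completeness of $\wi X\setminus\Sigma$) appears in your sketch, and without them the lower bound $P_p(z)\geq c^{b_p}$ with $b_p=o(A_p)$ has no proof.

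Two further points. First, your use of the domination hypothesis \eqref{e:domin0} is misplaced: the local upper bound away from the bad set only needs the normalization $\frac1{A_p}\int_X c_1(L_p,h_p)\wedge\omega^{n-1}=1$, which already gives relative compactness of the normalized local potentials; the real role of \eqref{e:domin0} is to bound the mass of the pulled-back curvature upstairs, $\int_{\wi X}c_1(\pi^\star L_p,\pi^\star h_p)\wedge\Omega^{n-1}\leq MA_p$ as in \eqref{e:domin}, which is what makes the argument work near $\Sigma$. Second, saying that ``the singular set is negligible because $X$ is normal'' does not upgrade $L^1_{loc}(X\setminus Y)$ convergence to $L^1(X)$ convergence: you need uniform integrability of $\frac1{A_p}\log P_p$ in a neighborhood of $Y$ (note $Y$ may be strictly larger than $X_\sing$, and the peak sections are only available at points of $X\setminus Y$, where $\pi$ is a biholomorphism). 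The paper handles this in its Step 2 by writing $\frac1{2A_p}\log P_p\circ\pi$ on a neighborhood of each point of $\Sigma$ as a difference $\frac1{A_p}\wi v_p-\frac1{A_p}\wi\psi_p$ of a psh function and a potential and invoking psh compactness there; some argument of this kind is indispensable and is absent from your proposal.
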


Note that a complex space $X$ that verifies (A)-(B) is a {\em Moishezon space}. 
Thus Theorem \ref{T:mt1} applies to any compact normal Moishezon space $X$, 
which is not necessarily assumed to be K\"ahler. 
Indeed, a singular Hermitian holomorphic line bundle $(L_p,h_p)$ over $X$ 
with strictly positive curvature current as in \eqref{e:pc} is big, 
hence $X$ is Moishezon (see, e.g., \cite[Proposition 2.3]{CMM17}, \cite[Propositions 3.2 and 3.3]{BCMN}). 
We recall that a (reduced) compact irreducible complex space $X$ of dimension $n$ is called a Moishezon space
if there exist $n$ algebraically independent meromorphic functions on $X$ 
(see \cite[Definition 3.5]{U75}, \cite[Section 3]{BCMN}). We refer to \cite[Section 3]{BCMN} 
and the references therein for the definition and some basic properties of big line bundles over complex spaces.

Theorem \ref{T:mt1} is proved in Section \ref{S:pfmt}. An important special case is provided 
by the sequence of powers $(L_p,h_p)=(L^p,h^p)$ of a singular Hermitian holomorphic line bundle $(L,h)$ 
with strictly positive curvature current. See Theorem \ref{T:mt2} in Section \ref{S:appl}, which gives a 
full generalization of Tian's theorem to the singular setting. We recall in Section \ref{S:appl} 
a few other important applications of Theorem \ref{T:mt1}, in particular to the asymptotic distribution 
of the zeros of random sequences of holomorphic sections (see Theorem \ref{T:zeros}).

\section{Proof of Theorem \ref{T:mt1}}\label{S:pfmt}

By a theorem of Moishezon (\cite{Moi66}, \cite[Theorem 3.6]{U75}), 
$X$ is bimeromorphically equivalent to a projective manifold. 
More precisely, since $X$ is assumed to be normal we have that 
$\codim X_\sing\geq2$ and the following holds 
(see \cite[Theorem 3.1]{BCMN}):

\begin{Theorem}\label{T:Moi}
If $X$ is a compact, irreducible, normal Moishezon space then there exists a connected 
projective manifold $\wi X$ and a surjective holomorphic map $\pi:\wi X\to X$, 
given as a composition of finitely many blow-ups with smooth center, such that 
$\pi:\wi X\setminus\Sigma\to X\setminus Y$ is a biholomorphism, where $Y$ is an 
analytic subset of $X$, $\codim Y\geq2$, $X_\sing\subset Y$, 
and $\Sigma=\pi^{-1}(Y)$ is a normal crossings divisor. 
\end{Theorem}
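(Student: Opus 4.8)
The plan is to combine Moishezon's structure theorem with Hironaka's resolution of singularities, using the normality of $X$ to control the codimension of the exceptional locus. First I would resolve the singularities of $X$: by resolution of singularities (in the analytic category) there is a proper modification $\mu:X'\to X$ from a smooth compact complex manifold $X'$, realized as a finite composition of blow-ups with smooth centers lying over $X_\sing$, such that $\mu$ is a biholomorphism over $X_\reg=X\setminus X_\sing$. Since $X$ is normal, $\codim X_\sing\geq2$, so $\mu$ is an isomorphism outside an analytic set of codimension $\geq2$, and $X'$, being bimeromorphic to $X$, is again Moishezon.

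Next I would pass to a projective model. By Moishezon's theorem \cite{Moi66}, a smooth compact Moishezon manifold becomes projective after finitely many blow-ups with smooth centers; applied to $X'$ this produces a connected projective manifold and a morphism $\nu:\wi X_0\to X'$ which is a composition of blow-ups with smooth centers. In a smooth ambient manifold every nontrivial blow-up has center of codimension $\geq2$ (a smooth hypersurface is Cartier, so its blow-up is an isomorphism), hence each exceptional divisor maps onto a center of codimension $\geq2$ and $\nu$ is an isomorphism off a codimension $\geq2$ set. Composing, $\pi_0:=\mu\circ\nu:\wi X_0\to X$ is a surjective modification, again expressed as a composition of blow-ups with smooth centers, with $\wi X_0$ projective and non-isomorphism locus of codimension $\geq2$.

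I would then take $Y\subset X$ to be the locus over which $\pi_0$ fails to be an isomorphism; it is an analytic set (its image is analytic by properness) with $X_\sing\subset Y$ and $\codim Y\geq2$ by the estimates above. Applying Hironaka's log resolution to the ideal sheaf $\cI_Y$, i.e.\ performing further blow-ups with smooth centers lying over $Y$, produces $\pi:\wi X\to X$ for which $\pi^{-1}(Y)$ is a normal crossings divisor, while $\wi X$ remains projective (a blow-up of a projective manifold along a smooth center is projective). Since these new centers lie over $Y$, they neither enlarge $Y$ nor create non-isomorphism points outside $Y$, and $\pi$ is still a composition of blow-ups with smooth centers. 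We set $\Sigma:=\pi^{-1}(Y)$.

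Finally I would verify that $\pi$ restricts to a biholomorphism $\wi X\setminus\Sigma\to X\setminus Y$. Over $X\setminus Y$ the map $\pi$ has finite fibers and is bimeromorphic, so by Zariski's main theorem and the normality of $X$ (equivalently $\pi_*\cO_{\wi X}=\cO_X$) it is a biholomorphism; conversely, were some fiber over a point of $Y$ finite, $\pi$ would be biholomorphic near that point, contradicting its membership in $Y$. Hence every fiber over $Y$ is positive-dimensional, $\pi^{-1}(Y)$ has pure codimension one and coincides with $\Sigma$, and $\pi$ is an isomorphism exactly over $X\setminus Y$. The main obstacle throughout is this codimension control: one must ensure that no exceptional divisor is contracted to a codimension-one subset, so that $\codim Y\geq2$ and $\pi^{-1}(Y)$ is genuinely a divisor. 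This is precisely where normality of $X$ is essential — it forces $\codim X_\sing\geq2$ and, via Zariski's main theorem, rules out finite bimeromorphic contractions — combined with the fact that every nontrivial blow-up with smooth center in a smooth ambient space has center of codimension $\geq2$.
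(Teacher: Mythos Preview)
The paper does not actually prove Theorem~\ref{T:Moi}; it is quoted from \cite[Theorem~3.1]{BCMN} (itself a refinement of the classical Moishezon theorem \cite{Moi66}, \cite[Theorem~3.6]{U75}), so there is no in-paper argument to compare with. Your sketch is the standard route one would expect such a reference to follow: first Hironaka's resolution to pass to a smooth Moishezon manifold, then Moishezon's theorem to reach a projective manifold by blow-ups with smooth centers, and finally an embedded/log resolution of the pullback of $Y$ to make $\pi^{-1}(Y)$ a simple normal crossings divisor. The codimension bookkeeping you do (centers of nontrivial blow-ups in a smooth ambient have codimension $\geq2$, and normality forces $\codim X_\sing\geq2$) is exactly what is needed to ensure $\codim Y\geq2$.

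One minor comment: your final paragraph is more elaborate than necessary. By construction $Y$ is defined as the non-isomorphism locus of $\pi_0$, and the subsequent log-resolution blow-ups have centers lying over $Y$; hence $\pi$ is already an isomorphism over $X\setminus Y$ without any appeal to Zariski's main theorem or a separate finite-fiber argument. Likewise, that $\Sigma=\pi^{-1}(Y)$ is a divisor is part of what the log resolution delivers (principalization of $\pi_0^{-1}\cI_Y$), so the positive-dimensional-fiber discussion, while correct, is redundant. None of this is a gap; your outline is sound.
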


Let $X,\omega, (L_p,h_p)$ verify assumptions (A)-(B) and $\pi:\wi X\to X$ be as in 
Theorem \ref{T:Moi}. 
In \cite{CMM17} we assumed that $X$ is a normal K\"ahler space, 
and we showed that the desingularization $\wi{X}$ obtained by
finitely many blow-ups with smooth centers as in \cite{BM97, GM06} is K\"ahler.
This is crucial for construction peak sections by using methods involving $\db$.
In our present situation we obtain a projective desingularization $\wi{X}$
since $X$ is Moishezon.

We will follow the arguments from the proof of 
\cite[Theorem 1.1]{CMM17}, working with $\pi:\wi X\to X$ instead of the desingularization 
of $X$ given in \cite[Section 2.3]{CMM17}, and using 
a K\"ahler form $\wi\omega$ on the projective manifold $\wi X$. 
We recall the following 
lemmas that will be needed in the proof.

\begin{Lemma}[{\cite[Lemma 2.1]{CMM17}}]\label{L:bPp} If
\[H^0_{(2)}(\widetilde X,\pi^\star L_p)=
\left\{\widetilde S\in H^0(\widetilde X,\pi^\star L_p):\,
\int_{\widetilde X}|\widetilde S|^2_{\pi^\star h_p}\,
\frac{\pi^\star\omega^n}{n!}<\infty\right\},\]
the map 
    $\pi^\star:H^0_{(2)}(X,L_p)\longrightarrow 
    H^0_{(2)}(\widetilde X,\pi^\star L_p)$ is an isometry and 
    the Bergman kernel function of 
    $H^0_{(2)}(\widetilde X,\pi^\star L_p)$ is 
    $\widetilde P_p=P_p\circ\pi$. 
\end{Lemma}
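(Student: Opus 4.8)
The plan is to reduce the statement to two elementary facts: that $\pi^\star$ is a bijection $H^0(X,L_p)\to H^0(\wi X,\pi^\star L_p)$, and that it preserves the $L^2$-norms; the Bergman-kernel identity will then follow formally. First I would note that pullback by the holomorphic map $\pi$ carries holomorphic sections to holomorphic sections and is injective because $\pi$ is surjective. For surjectivity I would take $\wi S\in H^0(\wi X,\pi^\star L_p)$, transport $\wi S|_{\wi X\setminus\Sigma}$ through the biholomorphism $\pi\colon\wi X\setminus\Sigma\to X\setminus Y$ of Theorem \ref{T:Moi} to a holomorphic section $S_0$ of $L_p$ over $X\setminus Y$ (note $X\setminus Y\subset X_\reg$ since $X_\sing\subset Y$), and then extend $S_0$ across $Y$: because $X$ is normal and $\codim Y\geq2$, the second Riemann extension theorem applied to the coefficients of $S_0$ in local holomorphic frames of $L_p$ produces $S\in H^0(X,L_p)$ with $S=S_0$ on $X\setminus Y$. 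Since $\pi^\star S$ and $\wi S$ then agree on the dense open set $\wi X\setminus\Sigma$, they agree on all of $\wi X$, so $\pi^\star S=\wi S$.

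Next I would prove the norm identity
\[
\int_{\wi X}|\pi^\star S|^2_{\pi^\star h_p}\,\frac{\pi^\star\omega^n}{n!}=\int_X|S|^2_{h_p}\,\frac{\omega^n}{n!}.
\]
The relevant observations are: $\Sigma$ is a proper analytic subset (in fact a divisor) of the manifold $\wi X$, hence Lebesgue-negligible there; $Y\cap X_\reg$ is $\omega^n$-negligible in $X_\reg$ because $\codim Y\geq2$, and $X_\sing$ is $\omega^n$-negligible by normality, so $\int_X=\int_{X_\reg\setminus Y}$; and on $\wi X\setminus\Sigma$ one has $|\pi^\star S|_{\pi^\star h_p}=|S|_{h_p}\circ\pi$, whence $|\pi^\star S|^2_{\pi^\star h_p}\,\pi^\star\omega^n=\pi^\star\!\big(|S|^2_{h_p}\,\omega^n\big)$. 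The identity then reduces to the change-of-variables formula for the orientation-preserving biholomorphism $\pi\colon\wi X\setminus\Sigma\to X_\reg\setminus Y$ applied to the top-degree form $|S|^2_{h_p}\,\omega^n/n!$. In particular $\|S\|_p=\|\pi^\star S\|_p$ (finite or infinite simultaneously), so combined with the bijectivity above, $\pi^\star$ restricts to an isometric isomorphism $H^0_{(2)}(X,L_p)\to H^0_{(2)}(\wi X,\pi^\star L_p)$.

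Finally, the Bergman-kernel statement is immediate: if $S_1^p,\dots,S_{d_p}^p$ is an orthonormal basis of $H^0_{(2)}(X,L_p)$, then $\pi^\star S_1^p,\dots,\pi^\star S_{d_p}^p$ is an orthonormal basis of $H^0_{(2)}(\wi X,\pi^\star L_p)$, and substituting into the definition \eqref{e:BFS1} of the Bergman kernel function, using $|\pi^\star S_j^p|_{\pi^\star h_p}=|S_j^p|_{h_p}\circ\pi$, gives $\wi P_p=\sum_j|S_j^p|^2_{h_p}\circ\pi=P_p\circ\pi$. The one genuinely delicate point is the extension step in the first paragraph: it is precisely here that normality of $X$ together with $\codim Y\geq2$ are used, and without them $\pi^\star$ need not be surjective. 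The measure-theoretic manipulations are routine once one notes that $\Sigma$ and $Y$ are negligible and that the singular metric $h_p$ has locally plurisubharmonic (in particular locally bounded above) weights, so $|S|^2_{h_p}$ is a well-defined non-negative measurable function and all the integrals above make sense in $[0,\infty]$.
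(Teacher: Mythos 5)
Your proof is correct and follows essentially the same route as the argument the paper relies on (it quotes \cite[Lemma 2.1]{CMM17} without reproving it): descend a section of $\pi^\star L_p$ through the biholomorphism off $\Sigma$, extend it across $Y$ using normality of $X$ and $\codim Y\geq2$, and use change of variables off the negligible sets $\Sigma$ and $Y$ to get a surjective isometry, after which $\wi P_p=P_p\circ\pi$ follows by transporting an orthonormal basis. You also correctly identified that surjectivity of $\pi^\star$ (not just norm preservation) is what the Bergman kernel identity needs, and that this is precisely where normality and $\codim Y\geq2$ enter.
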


\comment{
\begin{Lemma}{\cite[Lemma 2.2]{CMM17}}\label{L:Moi} 
Let $X,\omega$ be as in Theorem \ref{T:mt1} and 
$\pi:\wi X\to X$ be as in Theorem \ref{T:Moi}. 
Then there exists a smooth Hermitian metric $\theta$ on 
$F=\cO_{\widetilde X}(-\Sigma)$ and $C>0$ such that 
$\Omega=C\pi^\star\omega+c_1(F,\theta)$ is a Hermitian form on 
$\widetilde X$ and $\Omega\geq\pi^\star\omega$. 
\end{Lemma}

\begin{Lemma}[{\cite[Lemma 3.2]{CMM17}}]\label{L:tildehp} 
Let $X,\omega, (L_p,h_p)$ verify 
assumptions (A)-(B), and let $F=\cO_{\wi X}(-\Sigma)$, 
$\theta$, $\Omega=C\pi^\star\omega+c_1(F,\theta)$ be as in 
Lemma \ref{L:Moi}. Then there exist $\alpha\in(0,1)$, 
$b_p\in\mathbb N$, 
and singular Hermitian metrics $\wi h_p$ on 
$\pi^\star L_p\vert_{\wi X\setminus\Sigma}$ 
such that $a_p\geq Cb_p\,$, $b_p\to\infty$ and $b_p/A_p\to0$ 
as $p\to\infty$, $\wi h_p\geq\alpha^{b_p}\pi^\star h_p$ and 
 $c_1(\pi^\star L_p,\wi h_p)\geq b_p\Omega$ on
 $\wi X\setminus\Sigma$. 
 Moreover, for every relatively compact open subset $\wi U$ 
 of $\wi X\setminus\Sigma$ there exists a constant 
 $\beta_{\wi U}>1$ such that 
 $\wi h_p\leq\beta_{\wi U}^{b_p}\pi^\star h_p$ on $\wi U$.  
\end{Lemma}
}

\begin{Lemma}[{\cite[Lemma 2.2,\,Lemma 3.2]{CMM17}}]\label{L:thp}  
There exist $\alpha\in(0,1)$, $b_p\in\mathbb N$, 
a Hermitian form $\Omega$ on $\wi X$,
and singular Hermitian metrics $\wi h_p$ on 
$\pi^\star L_p\vert_{\wi X\setminus\Sigma}$ 
such that $\Omega\geq\pi^\star\omega$, $b_p\to\infty$ and $b_p/A_p\to0$ 
as $p\to\infty$, $\wi h_p\geq\alpha^{b_p}\pi^\star h_p$ and 
$c_1(\pi^\star L_p,\wi h_p)\geq b_p\Omega$ on $\wi X\setminus\Sigma$. 
Moreover, for every relatively compact open subset $\wi U$ of $\wi X\setminus\Sigma$ 
there exists a constant $\beta_{\wi U}>1$ such that 
$\wi h_p\leq\beta_{\wi U}^{b_p}\pi^\star h_p$ on $\wi U$.  
\end{Lemma}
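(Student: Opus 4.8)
The plan is to reconstruct, in the present non-Kähler setting, the two constructions of \cite[Lemmas 2.2 and 3.2]{CMM17}: first an auxiliary Hermitian form $\Omega$ on $\wi X$ built from the exceptional divisor $\Sigma$, and then the twisted metrics $\wi h_p$ on $\pi^\star L_p$ together with the weights $b_p$. The only inputs are the blow-up geometry of $\pi$ along $\Sigma$ and the growth condition (B); since these two lemmas do not use the global geometry of $\wi X$ beyond its being obtained from $X$ by finitely many blow-ups with smooth centres, the passage from the Kähler hypothesis of \cite{CMM17} to the present Moishezon hypothesis changes nothing in their proofs.

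First I would construct $\Omega$. Set $F=\cO_{\wi X}(-\Sigma)$ and let $s_\Sigma$ be the canonical holomorphic section of $\cO_{\wi X}(\Sigma)$ with divisor $\Sigma$. Because $\Sigma$ is the exceptional divisor of a finite composition of blow-ups with smooth centres, I would produce by induction on the number of blow-ups a smooth Hermitian metric $\theta$ on $F$ whose curvature $c_1(F,\theta)$ is positive along the fibres collapsed by $\pi$: for a single blow-up, $\cO(-E)$ restricts to the relatively ample bundle on the exceptional projective bundle and so carries a metric with fibrewise positive curvature, and for the composition one pulls back and combines these (this is \cite[Lemma 2.2]{CMM17}). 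Since $\pi^\star\omega\geq0$ is strictly positive transverse to these fibres, a compactness argument on $\wi X$ yields $C_0>0$ with $C_0\,\pi^\star\omega+c_1(F,\theta)>0$. Putting $C:=C_0+1$ and $\Omega:=C\,\pi^\star\omega+c_1(F,\theta)$ gives $\Omega-\pi^\star\omega=C_0\,\pi^\star\omega+c_1(F,\theta)>0$, so $\Omega$ is a Hermitian form on $\wi X$ with $\Omega\geq\pi^\star\omega$. After rescaling $\theta$ I may assume $g:=|s_\Sigma|^2_{\theta^\star}\leq1$ on $\wi X$, where $\theta^\star$ is the induced (dual) metric on $\cO_{\wi X}(\Sigma)$; note $g$ is continuous, $0\leq g\leq1$, vanishes exactly on $\Sigma$, and is positive on $\wi X\setminus\Sigma$.

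Next I would define $\wi h_p:=g^{-b_p}\,\pi^\star h_p$ on $\wi X\setminus\Sigma$, a singular Hermitian metric on $\pi^\star L_p$. By the Poincar\'e--Lelong formula, $\tfrac12\,dd^c\log g=c_1(F,\theta)$ on $\wi X\setminus\Sigma$, whence
\begin{equation*}
c_1(\pi^\star L_p,\wi h_p)=\pi^\star c_1(L_p,h_p)+b_p\,c_1(F,\theta)\geq a_p\,\pi^\star\omega+b_p\,c_1(F,\theta).
\end{equation*}
Comparing with $b_p\Omega=b_pC\,\pi^\star\omega+b_p\,c_1(F,\theta)$ and using $\pi^\star\omega\geq0$, the required bound $c_1(\pi^\star L_p,\wi h_p)\geq b_p\Omega$ holds as soon as $a_p\geq Cb_p$. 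This is where (B) enters: from $c_1(L_p,h_p)\geq a_p\omega$ we get $A_p=\int_X c_1(L_p,h_p)\wedge\omega^{n-1}\geq a_p\int_X\omega^n$, so $a_p\to\infty$ forces $A_p\to\infty$; I may then take $b_p:=\min\{\lfloor a_p/C\rfloor,\lfloor\sqrt{A_p}\,\rfloor\}\in\mathbb N$, which satisfies $Cb_p\leq a_p$, $b_p\to\infty$, and $b_p/A_p\leq A_p^{-1/2}\to0$.

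Finally, since $\wi h_p$, $\alpha^{b_p}\pi^\star h_p$ and $\beta^{b_p}\pi^\star h_p$ are explicit positive multiples of $\pi^\star h_p$, the sandwich estimates reduce to scalar inequalities for $g^{-b_p}$. As $0\leq g\leq1$ we have $g^{-b_p}\geq1\geq\alpha^{b_p}$ for every $\alpha\in(0,1)$, giving $\wi h_p\geq\alpha^{b_p}\pi^\star h_p$; and on any $\wi U\Subset\wi X\setminus\Sigma$ the continuous function $g$ is bounded below by some $\delta_{\wi U}>0$, so with $\beta_{\wi U}:=\max\{\delta_{\wi U}^{-1},2\}>1$ one has $g^{-b_p}\leq\beta_{\wi U}^{b_p}$ and hence $\wi h_p\leq\beta_{\wi U}^{b_p}\pi^\star h_p$ on $\wi U$. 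The genuinely geometric point, and the only nontrivial step, is the construction of $\theta$ and $\Omega$, namely the relative positivity of the exceptional divisor $\Sigma$; the curvature identity for $\wi h_p$ is a direct Poincar\'e--Lelong computation and the choice of $b_p$ is elementary.
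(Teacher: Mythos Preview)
Your proposal is correct and follows essentially the same approach as the paper: the paper sketches the construction after the lemma by setting $\Omega=C\pi^\star\omega+c_1(F,\theta)$ with $F=\cO_{\wi X}(-\Sigma)$ and defining $\wi h_p=e^{-2b_p\varphi}\pi^\star h_p$ for a weight $\varphi$ of $\theta$ on $\wi X\setminus\Sigma$, which is exactly your $g^{-b_p}\pi^\star h_p$ since $g=|s_\Sigma|^2_{\theta^\star}=e^{2\varphi}$ with respect to the global frame $s_\Sigma^{-1}$ of $F$ on $\wi X\setminus\Sigma$. Your explicit choice $b_p=\min\{\lfloor a_p/C\rfloor,\lfloor\sqrt{A_p}\rfloor\}$ and the derivation of the sandwich estimates from $0<g\leq1$ simply flesh out what the paper leaves implicit.
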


The Hermitian form $\Omega$ is obtained as $\Omega=C\pi^\star\omega+c_1(F,\theta)$,
where $\theta$ is a suitable metric on $F=\cO_{\widetilde X}(-\Sigma)$ and $C>0$ is
an appropriate constant. If $b_p\in\mathbb N$ is a sequence 
such that $b_p\to\infty$, $a_p\geq Cb_p$, $b_p/A_p\to0$ and if 
$\varphi$ is a weight  of $\theta$ on $\wi{X}\setminus\Sigma$, one defines the
metric $\wi{h}_p=e^{-2b_p\varphi}\pi^\star h_p$ on $\pi^\star L_p\vert_{\wi X\setminus\Sigma}$
and shows that it has the desired properties.
In particular the positivity of $c_1(\pi^\star L_p,\wi{h}_p)$ is needed to solve a 
$\db$-equation on $\wi{X}\setminus\Sigma$, by using the following version of Demailly's estimates 
for the $\db$-operator
\cite[Th\'eor\`eme 5.1]{D82} (see also \cite[Theorem 2.5]{CMM17}):

\begin{Theorem}\label{T:db}
Let $Z$, $\dim Z=n$, be a complete K\"ahler manifold and $\Theta$ be a 
K\"ahler form on $Z$ (not necessarily complete) such that its Ricci form 
$\ric_\Theta\geq-2\pi B\Theta$ for some constant $B>0$. Let 
$(L_p,h_p)$ be singular Hermitian holomorphic line bundles on $Z$ such that 
$c_1(L_p,h_p)\geq2a_p\Theta$, where $a_p\geq B$. If $g\in L^2_{0,1}(Z,L_p,loc)$ 
verifies $\db g=0$ and $\int_Z|g|^2_{h_p}\,\Theta^n<\infty$ then there exists 
$u\in L^2_{0,0}(Z,L_p,loc)$ such that $\db u=g$ and 
$\int_Z|u|^2_{h_p}\,\Theta^n\leq\frac{1}{a_p}\, \int_Z|g|^2_{h_p}\,\Theta^n$. 
\end{Theorem}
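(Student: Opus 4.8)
\emph{Proof plan.} The argument is a version of Demailly's $L^2$-estimate for $\db$ (cf.\ \cite[Th\'eor\`eme 5.1]{D82}); I would structure it as follows. The idea is to twist $L_p$ to forms of top holomorphic degree so that the curvature term in the Bochner--Kodaira--Nakano (BKN) identity becomes positive, to apply BKN on a complete K\"ahler manifold, to exhaust $(Z,\Theta)$ by \emph{complete} K\"ahler metrics $\Theta_\varepsilon$, and to pass to the limit $\varepsilon\to0$ using a monotonicity property peculiar to forms of top holomorphic degree.

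\smallskip
\emph{Twist.} First, set $E_p:=L_p\otimes K_Z^{-1}$ and equip it with $h_p$ tensored with the metric on $K_Z^{-1}$ induced by $\Theta$. Contraction with the local holomorphic $n$-vector field dual to the volume form is a pointwise isometry $L^2_{0,q}(Z,L_p)\cong L^2_{n,q}(Z,E_p)$ intertwining the two $\db$-complexes; under it a solution $u\in L^2_{0,0}(Z,L_p)$ of $\db u=g$ corresponds to an $E_p$-valued $(n,0)$-form $\wi u$ with $\db\wi u=\wi g$, where $\wi g$ is the $(n,1)$-form matching $g$. The curvature current of $E_p$ equals $c_1(L_p,h_p)$ plus the curvature of the induced metric on $K_Z^{-1}$, i.e.\ $c_1(L_p,h_p)+\tfrac1{2\pi}\ric_\Theta$, so $c_1(L_p,h_p)\geq2a_p\Theta$, $\ric_\Theta\geq-2\pi B\Theta$ and $a_p\geq B$ give
\[
c_1(E_p)\ \geq\ (2a_p-B)\Theta\ \geq\ a_p\Theta\qquad\text{on }Z .
\]
For $(n,1)$-forms the curvature term in the BKN identity is then positive -- whereas for $(0,1)$-forms valued in $L_p$ alone it would be negative, which is why the twist is needed -- and controlled below by $a_p\Theta$.

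\smallskip
\emph{Exhaustion and limit.} Since $\Theta$ need not be complete, I would fix a complete K\"ahler form $\wih\omega$ on $Z$ (it exists because $Z$ is a complete K\"ahler manifold) and set $\Theta_\varepsilon:=\Theta+\varepsilon\wih\omega$ for $\varepsilon>0$, which is complete. On $(Z,\Theta_\varepsilon)$ the smooth compactly supported $E_p$-valued $(n,1)$-forms are dense in $\Dom(\db)\cap\Dom(\db^{\,*})$ for the graph norm (classical, by completeness), the BKN identity holds on that domain, and together with the curvature bound above the standard functional-analytic argument (H\"ormander) produces an $E_p$-valued $(n,0)$-form $\wi u_\varepsilon$ with $\db\wi u_\varepsilon=\wi g$ and $\int_Z|\wi u_\varepsilon|^2_{\Theta_\varepsilon}\Theta_\varepsilon^n$ bounded by the natural right-hand side of Demailly's estimate relative to $\Theta_\varepsilon$. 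Two pointwise facts special to forms of degree $(n,\cdot)$ then give an $\varepsilon$-independent bound: a monotonicity property under $\Theta\leq\Theta_\varepsilon$ showing that this right-hand side, relative to $\Theta_\varepsilon$, is at most its value relative to $\Theta$, hence at most $\tfrac1{a_p}\int_Z|\wi g|^2_\Theta\Theta^n$; and the fact that for an $(n,0)$-form the density $|{\cdot}|^2_\omega\,\omega^n$ does not depend on the K\"ahler form $\omega$ (an $E_p$-valued $(n,0)$-form is a section of $K_Z\otimes E_p=L_p$, and the $K_Z$-factor cancels the volume form), so the left-hand side equals $\int_Z|\wi u_\varepsilon|^2_\Theta\Theta^n$. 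Thus $(\wi u_\varepsilon)_\varepsilon$ is bounded in the fixed Hilbert space $L^2(Z,L_p)$; a weakly convergent subsequence $\wi u_\varepsilon\rightharpoonup\wi u$ satisfies $\db\wi u=\wi g$ in the weak sense, and lower semicontinuity of the norm gives $\int_Z|\wi u|^2_\Theta\Theta^n\leq\tfrac1{a_p}\int_Z|\wi g|^2_\Theta\Theta^n$. Undoing the twist yields the required $u$.

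\smallskip
\emph{Main obstacle.} I expect the delicate points to be the non-completeness of $\Theta$ -- circumvented by the exhaustion $\Theta_\varepsilon=\Theta+\varepsilon\wih\omega$ together with the $(n,\cdot)$-monotonicity, which is exactly why the reduction to top holomorphic degree is indispensable -- and the fact that the metrics $h_p$ are only singular, so the integrations by parts in the BKN identity must be justified for $L^2_{loc}$ psh weights; this is the standard local regularization of the weights of $h_p$, which leaves the curvature lower bound unchanged. The remainder is classical $L^2$ theory, and a routine check of the normalization constants ($dd^c=\tfrac i\pi\ddbar$, the factors $2$ in $2a_p\Theta$ and $-2\pi B\Theta$, and $2a_p-B\geq a_p$) yields the constant $1/a_p$.
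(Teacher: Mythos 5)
The paper does not prove this statement at all: Theorem \ref{T:db} is quoted as a known result, namely Demailly's $L^2$-estimate \cite[Th\'eor\`eme 5.1]{D82} in the form recorded in \cite[Theorem 2.5]{CMM17} (see also \cite{CM15}). So the relevant comparison is with the classical argument, and your sketch reproduces it faithfully: the twist to $(n,1)$-forms with values in $L_p\otimes K_Z^{-1}$, whose curvature is bounded below by $(2a_p-B)\Theta\geq a_p\Theta$; the auxiliary complete metrics $\Theta_\varepsilon=\Theta+\varepsilon\widehat\omega$ to run the Bochner--Kodaira--Nakano inequality and H\"ormander duality; the two pointwise facts for top holomorphic degree (monotonicity of the relevant $(n,q)$-densities in the metric, and metric-independence of $|{\cdot}|^2\,dV$ for $(n,0)$-forms) to get an $\varepsilon$-independent bound; and a weak limit. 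The one step you compress is the handling of the singular metrics $h_p$: a global smooth regularization of the weight preserving the curvature lower bound is not available on a general complete K\"ahler manifold, and the standard remedy is to exhaust $Z$ by relatively compact (Stein) subsets, use local decreasing smooth psh approximations of the weights there, solve with uniform estimates, and pass to the limit by monotone convergence --- or simply to invoke the singular-metric version stated in \cite{D82,CMM17}, which is exactly what the paper does. With that caveat made explicit, your plan is the standard proof and is correct, including the bookkeeping $2a_p-B\geq a_p$ that produces the constant $1/a_p$.
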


\begin{proof}[Proof of Theorem \ref{T:mt1}] 
By \eqref{e:BFS1} we have that $\log P_p\in L^1(X,\omega^n)$ and 
\[\gamma_p-c_1(L_p,h_p)=\frac{1}{2}\,dd^c\log P_p.\]
Thus $(ii)$ follows at once from $(i)$. To prove $(i)$ we proceed in two steps. 

\smallskip

\par {\em Step 1.} We prove that 
$\frac{1}{A_p}\log P_p\to0$ as $p\to\infty$, in 
$L^1_{loc}(X\setminus Y,\omega^n)$. Fix $x\in X\setminus Y\subset X_\reg$, 
$W\Subset X\setminus Y$ a contractible Stein coordinate 
neighborhood of $x$, $r_0>0$ such that the (closed) ball 
$V:=B(x,2r_0)\subset W$, and set $U=B(x,r_0)$. 
Note that the currents $\{\frac{1}{A_p}\,c_1(L_p,h_p)\}$ 
have uniformly bounded mass. By \cite[Proposition A.16]{DS10} 
(see also \cite{DNS08}) and \cite[Theorem 3.2.12]{Ho}, 
we infer that there exist psh functions $\psi_p$ on $\inte V$ such that 
$dd^c\psi_p=c_1(L_p,h_p)$ and the sequence $\{\frac{1}{A_p}\,\psi_p\}$
is relatively compact in $L^1_{loc}(\inte V,\omega^n)$. 
Since $L_p|_W$ is holomorphically trivial, we can find holomorphic 
frames $e_p$ for $L_p|_{\inte V}$ such that $\psi_p$ 
are the corresponding psh weights of $h_p$, so $|e_p|_{h_p}=e^{-\psi_p}$.

Let $\wi\omega$ be a K\"ahler form on $\wi X$ and $\Omega$
be the Hermitian form from Lemma \ref{L:thp}. 
Then there exists constants $\delta_1,\delta_2>0$ such that 
\begin{equation}\label{e:KH}
\Omega\geq\delta_1\wi\omega\,,\;\wi\omega\geq\delta_2\Omega\geq
\delta_2\pi^\star\omega.
\end{equation} 

With $\{b_p\}$ as in Lemma \ref{L:thp}, we prove that there exist 
$C_1>1$ and $p_0\in\mathbb N$ such that 
\begin{equation}\label{e:Bkf}
-\frac{b_p\log C_1}{A_p}\leq\frac{\log P_p(z)}{A_p}
\leq\frac{\log(C_1r^{-2n})}{A_p}+\frac{2}{A_p}\,\left(\max_{B(z,r)}
\psi_p-\psi_p(z)\right)
\end{equation}
holds for all $p>p_0$, $0<r<r_0$, and $z\in U$ with $\psi_p(z)>-\infty$. 
The upper bound in \eqref{e:Bkf} follows from the subaverage inequality, 
exactly as the upper bound from \cite[(7)]{CM15}. 

We show next that there exist $c\in(0,1)$ and $p_0\in\mathbb N$ with the following 
property: if $p>p_0$ and $z\in U$ is such that $\psi_p(z)>-\infty$, then there exists 
$S_{z,p}\in H^0_{(2)}(X,L_p)$ with $S_{z,p}(z)\neq0$ and 
\begin{equation}\label{e:lest}
c^{b_p}\|S_{z,p}\|_p^2\leq|S_{z,p}(z)|^2_{h_p}.
\end{equation}
This yields the lower bound in \eqref{e:Bkf}, since 
$P_p(z)\geq|S_{z,p}(z)|^2_{h_p}/\|S_{z,p}\|_p^2\geq c^{b_p}$. 
To this end we work first on 
$\pi^\star L_p|_{\wi X\setminus\Sigma}$ using the metric 
$\wi h_p$ from Lemma \ref{L:thp}. 
By estimates \eqref{e:KH},
\[c_1\big(\pi^\star L_p\vert_{\wi X\setminus\Sigma},\wi h_p\big)
\geq b_p\Omega\geq\delta_1b_p\wi\omega \text{ on } \wi X\setminus\Sigma,
\text{ where } b_p\to\infty.\]

We have that $\wi X\setminus\Sigma$ has a complete K\"ahler 
metric \cite{D82,O87}, 
and $\ric_{\wi\omega}\geq-2\pi B\wi\omega$ on $\wi X$ for some $B>0$.
Using ideas from \cite[Proposition 3.1]{D92}, \cite[Section 9]{D93}, we 
apply the Ohsawa-Takegoshi extension theorem \cite{OT87} and 
Theorem \ref{T:db} as in the proof of \cite[Theorem 5.1]{CM15} to show 
that there exist $C_2>1$, $p_0\in\mathbb N$, such that if $p>p_0$ and 
$\wi z\in\pi^{-1}(U)$, $\psi_p\circ\pi(\wi z)>-\infty$, then there is 
$\wi S\in H^0(\wi X\setminus\Sigma,\pi^\star L_p)$ verifying $\wi S(\wi z)\neq0$ and 
\[\int_{\wi X\setminus\Sigma}|\wi S|^2_{\wi h_p}\,\frac{\wi\omega^n}{n!}\leq 
C_2|\wi S(\wi z)|^2_{\wi h_p}.\]
By Lemma \ref{L:thp} and \eqref{e:KH} we obtain  
\begin{equation}\label{e:lest1}
\delta_2^n\alpha^{b_p}\int_{\wi X\setminus\Sigma}|\wi S|^2_{\pi^\star h_p}\,
\frac{\pi^\star\omega^n}{n!}\leq C_2\beta^{b_p}|\wi S(\wi z)|^2_{\pi^\star h_p},
\end{equation}
where $\beta>1$ is so that $\wi h_p\leq\beta^{b_p}\pi^\star h_p$ on $\pi^{-1}(U)$.
As $\pi:\wi X\setminus\Sigma\to X\setminus Y$ is a biholomorphism, we let $z=\pi(\wi z)$ and $S_{z,p}$ be the section of $L_p\vert_{X\setminus Y}$
induced by $\wi S$. Since $X$ is normal and $\codim Y\geq2$, $S_{z,p}$ 
extends to a holomorphic section on $X$ and \eqref{e:lest} follows from \eqref{e:lest1}. 

Recall that $\{\frac{1}{A_p}\,\psi_p\}$ is relatively compact in 
$L^1_{loc}(\inte V,\omega^n)$, hence it is locally uniformly upper 
bounded in $\inte V$. It follows from \eqref{e:Bkf} that there is a 
constant $C_3>0$ such that 
\begin{equation}\label{e:uest}
\left|\frac{1}{A_p}\log P_p\right|
\leq C_3-\frac{2}{A_p}\,\psi_p\,\text{ a.e.\ on } U,\;\forall\,p>p_0.
\end{equation}
Moreover, if a subsequence $\frac{1}{A_{p_j}}\,\psi_{p_j}\to\psi$
in $L^1_{loc}(\inte V,\omega^n)$ and a.e.\ on $\inte V$, where $\psi$ is psh on $\inte V$, 
we infer from \eqref{e:Bkf} and the Hartogs lemma \cite[Theorem 3.2.13]{Ho} that 
\[0\leq\liminf\frac{\log P_{p_j}(z)}{A_{p_j}}
\leq\limsup\frac{\log P_{p_j}(z)}{A_{p_j}}
\leq2\left(\max_{B(z,r)}\psi-\psi(z)\right)\]
holds for a.e.\ $z\in U$ and every $r<r_0$. Thus 
$\frac{1}{A_{p_j}}\log P_{p_j}\to0$ a.e.\ on $U$, and hence 
in $L^1(U,\omega^n)$ by \eqref{e:uest} and the generalized Lebesgue dominated convergence theorem.
We conclude that $\frac{1}{A_p}\log P_p\to0$ as $p\to\infty$ in 
$L^1_{loc}(X\setminus Y,\omega^n)$.

\smallskip

{\em Step 2.} We finish the proof of $(i)$ by showing that 
there exists a compact set 
$K\subset X$ such that $Y\subset\inte K$ and 
$\frac{1}{A_p}\log P_p\to0$ in $L^1(K,\omega^n)$.
Let $H^0_{(2)}(\wi X,\pi^\star L_p)$ be the Bergman spaces from 
Lemma \ref{L:bPp}. It follows by \eqref{e:domin0}
that there exists $M>0$ such that 
\begin{equation}\label{e:domin}
\int_{\wi X}c_1(\pi^\star L_p,\pi^\star h_p)\wedge\Omega^{n-1}
\leq MA_p\;,\;\;\forall\,p\geq1\,.
\end{equation}

Let $y\in\Sigma$. By \eqref{e:domin}, we can proceed as in 
Step 1 to find an open neighborhood $\wi W$ of $y$ and 
holomorphic frames $\wi e_p$ of $\pi^\star L_p\vert_{\wi W}$ 
with corresponding psh weights $\wi\psi_p$ of $\pi^\star h_p$, 
such that the sequence 
$\{\frac{1}{A_p}\,\wi\psi_p\}$
is relatively compact in $L^1_{loc}(\wi W,\Omega^n)$. 
Let $\{\wi S^p_j:\,1\leq j\leq d_p\}$
be an orthonormal basis of $H^0_{(2)}(\wi X,\pi^\star L_p)$ 
and $\wi S^p_j=\wi s^p_j\wi e_p$\,, 
with $\wi s^p_j\in\cO_{\wi X}(\wi W)$. By Lemma \ref{L:bPp}, 
\[\frac{1}{A_p}\,\wi v_p-\frac{1}{A_p}\,
\wi\psi_p=\frac{1}{2A_p}\log P_p\circ\pi, \text{ where }
\wi v_p=\frac{1}{2}\,\log\Big(\sum_{j=1}^{d_p}|\wi s^p_j|^2\Big)\in\PSH(\wi W).\]

We claim that $\frac{1}{A_p}\log P_p\circ\pi\to0$ in 
$L^1_{loc}(\wi W,\Omega^n)$. Indeed, assume that a subsequence 
$\{\frac{1}{A_{p_j}}\,\wi\psi_{p_j}\}$ 
converges in $L^1_{loc}(\wi W,\Omega^n)$ to a psh function 
$\wi\psi$ on $\wi W$. By Step 1, $\frac{1}{A_p}\log P_p\circ\pi\to0$ in
$L^1_{loc}(\wi W\setminus\Sigma,\Omega^n)$, hence $\frac{1}{A_{p_j}}\,\wi v_{p_j}\to\wi\psi$ in 
$L^1_{loc}(\wi W\setminus\Sigma,\Omega^n)$. It follows that $\{\frac{1}{A_{p_j}}\,\wi v_{p_j}\}$ is locally uniformly upper bounded in $\wi W$ and $\frac{1}{A_{p_j}}\,\wi v_{p_j}\to\wi\psi$ in $L^1_{loc}(\wi W,\Omega^n)$. This proves our claim.

Since $\Sigma$ is compact, we infer by the above that there exists a compact set $K\subset X$ such that $Y\subset\inte K$ and $\frac{1}{A_p}\log P_p\circ\pi\to0$ in $L^1(\pi^{-1}(K),\Omega^n)$. Then
\[\frac{1}{A_p}\int_K|\log P_p|\,\omega^n=\frac{1}{A_p}\int_{\pi^{-1}(K)}
|\log P_p\circ\pi|\,\pi^\star\omega^n\leq\frac{1}{A_p}\int_{\pi^{-1}(K)}|\log P_p\circ\pi|\,
\Omega^n\to0\]
as $p\to\infty$, and the proof is finished.
\end{proof}

\section{Applications}\label{S:appl}

In the case of the sequence of powers of a single line bundle, 
Theorem \ref{T:mt1} yields the following generalization of 
Tian's theorem to the setting of big line bundles on Moishezon spaces:

\begin{Theorem}\label{T:mt2}
Let $X$ be a compact, reduced, irreducible, normal complex space 
of dimension $n$ and $(L,h)$ be a singular Hermitian holomorphic 
line bundle on $X$ such that $c_1(L,h)\geq\varepsilon\omega$, 
where $\varepsilon>0$ is a constant and $\omega$ is a 
Hermitian form on $X$. If $P_p,\gamma_p$ are the Bergman 
kernel function and Fubini-Study current of 
$H^0_{(2)}(X,L^p,h^p,\omega^n)$ then, as $p\to\infty$,
\[\frac{1}{p}\,\log P_p\to 0 \text{ in } 
L^1(X,\omega^n),\;\frac{1}{p}\,\gamma_p\to c_1(L,h) 
\text{ weakly on } X.\]
\end{Theorem}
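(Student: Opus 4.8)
The plan is to deduce Theorem \ref{T:mt2} directly from Theorem \ref{T:mt1}, applied to the sequence $(L_p,h_p)=(L^p,h^p)$, $p\geq1$, followed by a rescaling. So the first task is to verify that this sequence satisfies conditions (A)-(B). Condition (A) is part of the hypothesis of Theorem \ref{T:mt2}. For (B), note that $c_1(L^p,h^p)=p\,c_1(L,h)\geq p\varepsilon\,\omega$, so \eqref{e:pc} holds with $a_p=p\varepsilon\to\infty$. Moreover $A_p=\int_X c_1(L^p,h^p)\wedge\omega^{n-1}=pA_1$, where $A_1:=\int_X c_1(L,h)\wedge\omega^{n-1}$ satisfies $A_1\geq\varepsilon\int_X\omega^n>0$ because $c_1(L,h)\geq\varepsilon\omega$.

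Next I would check the domination condition \eqref{e:domin0}. Since $c_1(L^p,h^p)=p\,c_1(L,h)$ and $A_p=pA_1$, one has $c_1(L^p,h^p)=A_p\,T_0$ for all $p\geq1$, with $T_0:=A_1^{-1}c_1(L,h)$; it then remains only to observe that $T_0\in\cT(X)$. Indeed, the curvature current $c_1(L,h)$ is a closed current of bidegree $(1,1)$ whose local potentials are the weights of $h$, and the assumption $c_1(L,h)\geq\varepsilon\omega\geq0$ forces these weights to be psh, so $c_1(L,h)$, and hence $T_0$, is a positive closed $(1,1)$-current with local psh potentials. Thus conditions (A)-(B) hold for $(L^p,h^p)$.

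Now Theorem \ref{T:mt1}(i) yields $\frac{1}{A_p}\log P_p=\frac{1}{pA_1}\log P_p\to0$ in $L^1(X,\omega^n)$, and multiplying by the fixed positive constant $A_1$ gives $\frac{1}{p}\log P_p\to0$ in $L^1(X,\omega^n)$, which is the first assertion of Theorem \ref{T:mt2}. For the second assertion I would argue directly rather than invoke Theorem \ref{T:mt1}(ii): by \eqref{e:BFS1} we have $\gamma_p-c_1(L^p,h^p)=\frac{1}{2}\,dd^c\log P_p$ on $X$, hence
\[\frac{1}{p}\,\gamma_p-c_1(L,h)=\frac{1}{p}\big(\gamma_p-c_1(L^p,h^p)\big)=\frac{1}{2p}\,dd^c\log P_p.\]
Since $\frac{1}{p}\log P_p\to0$ in $L^1(X,\omega^n)$ and the operator $dd^c$ is continuous for weak convergence of currents, the right-hand side tends to $0$, i.e.\ $\frac{1}{p}\,\gamma_p\to c_1(L,h)$ weakly on $X$.

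The only point requiring a little care is the verification that $T_0\in\cT(X)$, namely that the curvature current of a singular Hermitian metric with nonnegative curvature has local psh potentials on the possibly singular space $X$; this uses the conventions recalled in Section \ref{S:intro} (see also \cite{CMM17}). Beyond that, there is no genuine obstacle: Theorem \ref{T:mt2} is a formal consequence of the more general Theorem \ref{T:mt1} together with the elementary rescaling $A_p=pA_1$.
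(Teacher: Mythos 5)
Your proposal is correct and follows essentially the same route as the paper: both deduce Theorem \ref{T:mt2} from Theorem \ref{T:mt1} by applying it to $(L^p,h^p)$ with $a_p=p\varepsilon$, $A_p=p\int_X c_1(L,h)\wedge\omega^{n-1}$ and $T_0=c_1(L,h)/\int_X c_1(L,h)\wedge\omega^{n-1}$, then rescaling by the fixed constant. Your extra checks (positivity of $A_1$, that $T_0\in\cT(X)$, and the direct $dd^c$ argument for the second assertion) are just explicit versions of what the paper leaves implicit.
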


\begin{proof}
If $\|c_1(L,h)\|=\int_Xc_1(L,h)\wedge\omega^{n-1}$, 
then the assumptions (A)-(B) hold with 
\[a_p=p\,\varepsilon,\;A_p=p\,\|c_1(L,h)\|,\;T_0=c_1(L,h)/\|c_1(L,h)\|.\] 
\end{proof}

Recall that a K\"ahler current is a positive closed current $T$ 
of bidegree $(1,1)$ on $X$ such that $T\geq\varepsilon\omega$ 
for some constant $\varepsilon>0$. 
Let $(L,h)$ be a singular Hermitian holomorphic line bundle on $X$
with positive curvature current $c_1(L,h)\geq0$, and such that $L$ 
has a singular Hermitian metric $h_0$ whose curvature is a K\"ahler current. 
As in \cite[Corollary 5.2]{CMM17}, Theorem \ref{T:mt1} 
can be applied to the sequence of line bundles 
$(L^p,h^{p-n_p}\otimes h_0^{n_p})$, where 
$n_p\in\N$ and $n_p\to\infty$, $n_p/p\to0$ as $p\to\infty$. 
One can also apply Theorem \ref{T:mt1}
to the sequence of tensor products of powers of several line bundles 
as in \cite[Corollary 5.11]{CMM17}. 
We refer to \cite[Section 5]{CMM17} for the details.

\smallskip

Let us consider now the special case when $X$ is smooth, i.e., 
a connected compact complex manifold of dimension $n$. 
If $X$ is assumed to be K\"ahler then the domination condition 
\eqref{e:domin0} is not needed as one can work directly on $X$ 
without the use of a modification $\pi$. 
More precisely, in \cite{CMM17} we proved the following:

\begin{Theorem}{\cite[Theorem 1.2]{CMM17}}\label{T:mtK} 
Let $(X,\omega)$ be a compact K\"ahler manifold of dimension $n$
and $(L_p,h_p)$, $p\geq1$, be a sequence of singular Hermitian
holomorphic line bundles on $X$ which satisfy 
$c_1(L_p,h_p)\geq a_p\,\omega$, where $a_p>0$ and 
$a_p\to\infty$. If $P_p,\gamma_p$ are the Bergman kernel function 
and Fubini-Study current of $H^0_{(2)}(X,L_p)$, 
and if $A_p=\int_Xc_1(L_p,h_p)\wedge\omega^{n-1}$, 
then $\frac{1}{A_p}\log P_p\to 0$ in $L^1(X,\omega^n)$ 
and $\frac{1}{A_p}\,(\gamma_p-c_1(L_p,h_p))\to 0$ weakly on $X$.
\end{Theorem}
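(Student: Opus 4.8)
The plan is to run the two-step argument from the proof of Theorem~\ref{T:mt1} directly on $X$: since $X$ is already a compact K\"ahler manifold, no desingularization $\pi$ is needed, there is no analogue of Step~2, and the domination hypothesis \eqref{e:domin0} plays no role — this is exactly what makes the statement cleaner than Theorem~\ref{T:mt1}. As in that proof, $\log P_p\in L^1(X,\omega^n)$ and $\gamma_p-c_1(L_p,h_p)=\tfrac12\,dd^c\log P_p$, so the Fubini--Study statement follows once we show $\tfrac{1}{A_p}\log P_p\to0$ in $L^1(X,\omega^n)$. Being compact, $X$ is automatically a complete K\"ahler manifold, and $\ric_\omega\geq-2\pi B\omega$ on $X$ for some $B>0$, which is precisely what is required to apply the $\db$-estimate Theorem~\ref{T:db} with $\Theta=\omega$.

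Fix $x\in X$, a contractible Stein coordinate neighborhood $W$, and closed balls $V=B(x,2r_0)\Subset W$, $U=B(x,r_0)$. The currents $\{\tfrac{1}{A_p}c_1(L_p,h_p)\}$ have uniformly bounded mass by definition of $A_p$ and compactness of $X$, so by \cite[Proposition A.16]{DS10} and \cite[Theorem 3.2.12]{Ho} there are psh functions $\psi_p$ on $\inte V$ with $dd^c\psi_p=c_1(L_p,h_p)$ and $\{\tfrac{1}{A_p}\psi_p\}$ relatively compact in $L^1_{loc}(\inte V,\omega^n)$; shrinking $W$ we take holomorphic frames $e_p$ of $L_p|_{\inte V}$ with $|e_p|_{h_p}=e^{-\psi_p}$. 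I would then prove the two-sided bound: for all large $p$, all $0<r<r_0$, and a.e.\ $z\in U$ with $\psi_p(z)>-\infty$,
\[-\frac{\log C_1}{A_p}\leq\frac{\log P_p(z)}{A_p}\leq\frac{\log(C_1r^{-2n})}{A_p}+\frac{2}{A_p}\Big(\max_{B(z,r)}\psi_p-\psi_p(z)\Big).\]
The upper bound follows from the extremal characterization of $P_p$ together with the subaverage inequality for the psh function $\tfrac12\log\sum|s^p_j|^2$, exactly as in \cite[(7)]{CM15}. For the lower bound I would construct a peak section: since $c_1(L_p,h_p)\geq a_p\omega$ with $a_p\to\infty$, after twisting $h_p$ by a weight with a logarithmic pole at $z$, cut off away from $z$, the curvature stays $\geq(a_p-C)\omega$ with $C$ independent of $p$, so for large $p$ one may apply the Ohsawa--Takegoshi extension theorem \cite{OT87} together with Theorem~\ref{T:db}, as in the proof of \cite[Theorem 5.1]{CM15}, to obtain $S_{z,p}\in H^0_{(2)}(X,L_p)$ with $S_{z,p}(z)\neq0$ and $\|S_{z,p}\|_p^2\leq C_1|S_{z,p}(z)|^2_{h_p}$, whence $P_p(z)\geq1/C_1$. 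Note that, in contrast with \eqref{e:Bkf}, no factor $c^{b_p}$ appears, since one works directly on $X$.

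Combining the two bounds with the local uniform upper boundedness of $\{\tfrac{1}{A_p}\psi_p\}$ yields $\big|\tfrac{1}{A_p}\log P_p\big|\leq C_3-\tfrac{2}{A_p}\psi_p$ a.e.\ on $U$. Passing to a subsequence with $\tfrac{1}{A_{p_j}}\psi_{p_j}\to\psi$ in $L^1_{loc}(\inte V,\omega^n)$ and a.e., the Hartogs lemma \cite[Theorem 3.2.13]{Ho} and the two-sided bound force $\tfrac{1}{A_{p_j}}\log P_{p_j}\to0$ a.e.\ on $U$, hence in $L^1(U,\omega^n)$ by the generalized dominated convergence theorem, with domination by $C_3-\tfrac{2}{A_{p_j}}\psi_{p_j}$, which converges in $L^1(U,\omega^n)$. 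A finite covering of $X$ gives $\tfrac{1}{A_p}\log P_p\to0$ in $L^1_{loc}(X,\omega^n)$, which by compactness of $X$ is $L^1(X,\omega^n)$; since every subsequence admits a further subsequence converging to $0$, the whole sequence converges to $0$. I expect the peak-section construction behind the lower bound to be the only real obstacle: with $h_p$ merely singular one must handle the points of its polar set and extract the correct $p$-dependence of the constants, and this is where Ohsawa--Takegoshi and Demailly's $\db$-estimates do the work; everything else is soft functional analysis and potential theory.
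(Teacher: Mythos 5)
Your proposal is correct and is essentially the paper's own route: Theorem \ref{T:mtK} is exactly \cite[Theorem 1.2]{CMM17}, whose proof is Step 1 of the proof of Theorem \ref{T:mt1} run directly on the compact K\"ahler manifold $X$ (no modification $\pi$, no Step 2, no condition \eqref{e:domin0}), with the subaverage upper bound, the Ohsawa--Takegoshi plus Theorem \ref{T:db} peak-section lower bound giving $P_p(z)\geq 1/C_1$ uniformly, and the Hartogs/dominated-convergence conclusion. Your observation that the lower bound carries no $c^{b_p}$ factor, since no auxiliary twisting metric $\wi h_p$ is needed, is precisely the simplification the paper points to in this case.
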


However, if $X$ is a Moishezon manifold which is not K\"ahler, 
and hence not projective, we still have to use in our proof of 
Theorem \ref{T:mt1} the modification $\pi:\wi X\to X$ provided 
in Theorem \ref{T:Moi}. So we have to require the domination 
condition \eqref{e:domin0} in assumption (B).

\smallskip

One of the main applications of Tian's theorem is to the study 
of the asymptotic distribution of the zeros of random sequences 
of sections in $H^0(Z,L^p)$ as $p\to\infty$, 
where $L$ is a holomorphic line bundle over a compact 
complex manifold $Z$. 
This started with the pioneering work of 
Shiffman and Zelditch \cite{ShZ99} in the case of 
a positive line bundle $(L,h)$ over a projective manifold $Z$ 
(see also \cite{ShZ08,Sh08}). It is shown in \cite{ShZ99} 
that for almost all sequences 
$\{\sigma_p\in H^0(Z,L^p)\}_{p\geq1}$, 
with respect to the spherical measure, 
one has that $\frac{1}{p}\,[\sigma_p=0]\to c_1(L,h)$ 
weakly on $Z$, where $[s=0]$ denotes the current of 
integration over the zero divisor of a holomorphic section $s$.
In the case of singular Hermitian holomorphic line bundles 
we proved that similar results hold in different contexts \cite{CM15,CM13,CMN16,CMN18}. 

The study of the asymptotic distribution of zeros of random sections in the Bergman spaces $H^0_{(2)}(X,L_p)$ for an arbitrary sequence of singular Hermitian holomorphic line bundles $(L_p,h_p)$ over a compact normal K\"ahler space $X$ was pursued in \cite{CMM17,BCM}. In particular we considered in \cite[Theorem 1.1]{BCM} very general probability measures on the spaces $H^0_{(2)}(X,L_p)$, as follows. We identify the spaces $H^0_{(2)}(X,L_p)\simeq \C^{d_p}$ using fixed orthonormal bases $S_1^p,\dots,S_{d_p}^p$ and we endow them with probability measures $\sigma_p$ such that the following holds:

\medskip

(C) There exist a constant $\nu\geq1$ and for every $p\geq1$ constants $C_p>0$ such that  
\[\int_{\C^{d_p}}\big|\log|\langle a,u\rangle|\,\big|^\nu\,d\sigma_p(a)
\leq C_p\,,\,\text{for any $u\in\C^{d_p}$ with $\|u\|=1$}\,.\] 

\smallskip

Note that \cite[Theorem 1.1]{BCM} holds in our present context. Indeed, we can apply the general equidistribution result \cite[Theorem 4.1]{BCM} together with Theorem \ref{T:mt1}. We recall one of its 
assertions here.

\begin{Theorem}\label{T:zeros}
Assume that $X,\omega,(L_p,h_p),\sigma_p$ 
verify (A),\,(B),\,(C) and consider the product probability space
\[(\mathcal{H},\sigma)=\left(\prod_{p=1}^\infty H^0_{(2)}(X,L_p),\prod_{p=1}^\infty\sigma_p\right).\]
If $\sum_{p=1}^{\infty}C_pA_p^{-\nu}<\infty$ then for $\sigma$-a.e.\ sequence $\{s_p\}\in\mathcal{H}$ we have, as $p\to\infty$,
\[\frac{1}{A_p}\log|s_p|_{h_p}\to0 \text{ in } L^1(X,\omega^n),\;
\frac{1}{A_p}\big([s_p=0]-c_1(L_p,h_p)\big) \to0 \text{ weakly on } X.\]
\end{Theorem}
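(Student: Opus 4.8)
The plan is to deduce Theorem~\ref{T:zeros} from the general equidistribution result \cite[Theorem 4.1]{BCM} combined with the Bergman kernel asymptotics of Theorem~\ref{T:mt1}. First I would recall the precise statement of \cite[Theorem 4.1]{BCM}: it takes as input a sequence of Bergman spaces $H^0_{(2)}(X,L_p)$ together with probability measures $\sigma_p$ satisfying the moment bound (C) with constant $C_p$, and asserts that if $\sum_p C_p A_p^{-\nu}<\infty$ and if, in addition, $\tfrac{1}{A_p}\log P_p\to 0$ in $L^1(X,\omega^n)$, then for $\sigma$-a.e.\ sequence $\{s_p\}$ one has $\tfrac{1}{A_p}\log|s_p|_{h_p}\to 0$ in $L^1(X,\omega^n)$. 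The point is that the first hypothesis is exactly the summability condition assumed in Theorem~\ref{T:zeros}, while the second hypothesis is precisely part~(i) of Theorem~\ref{T:mt1}, which is available here because $X,\omega,(L_p,h_p)$ satisfy (A)--(B).

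The main step is therefore to verify that the hypotheses of \cite[Theorem 4.1]{BCM} hold verbatim in the present setting. Concretely: (1) by (A)--(B) the spaces $H^0_{(2)}(X,L_p)$ are finite-dimensional (a standard consequence of $c_1(L_p,h_p)\ge a_p\omega$, via the $L^2$ estimates and compactness of $X$), so the identification $H^0_{(2)}(X,L_p)\simeq\C^{d_p}$ used in (C) makes sense; (2) condition (C) is assumed; (3) Theorem~\ref{T:mt1}(i) gives $\tfrac{1}{A_p}\log P_p\to0$ in $L^1(X,\omega^n)$. Feeding these into \cite[Theorem 4.1]{BCM} yields the first convergence $\tfrac{1}{A_p}\log|s_p|_{h_p}\to0$ in $L^1(X,\omega^n)$ for $\sigma$-a.e.\ sequence.

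For the second convergence, the Lelong--Poincar\'e formula on $X_\reg$ gives, for a local holomorphic frame $e_p$ of $L_p$ with weight $\varphi_p$ (so $|e_p|_{h_p}=e^{-\varphi_p}$) and $s_p=f_p e_p$,
\[
[s_p=0]=dd^c\log|f_p|=dd^c\log|s_p|_{h_p}+dd^c\varphi_p=dd^c\log|s_p|_{h_p}+c_1(L_p,h_p),
\]
so that $[s_p=0]-c_1(L_p,h_p)=dd^c\log|s_p|_{h_p}$ as currents on $X$ (the identity extends across $X_\sing$ since $\codim X_\sing\ge2$ and both sides are closed positive, resp.\ normal, currents). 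Dividing by $A_p$ and invoking continuity of $dd^c$ in the weak topology, the $L^1$-convergence $\tfrac{1}{A_p}\log|s_p|_{h_p}\to0$ immediately yields $\tfrac{1}{A_p}\big([s_p=0]-c_1(L_p,h_p)\big)\to0$ weakly on $X$.

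I do not expect a serious obstacle: the content is entirely in Theorem~\ref{T:mt1}, already proved, and in \cite[Theorem 4.1]{BCM}, whose proof does not use the K\"ahler assumption on $X$ and uses $X$ only through the abstract data $(H^0_{(2)}(X,L_p),\sigma_p,P_p,\omega^n)$. The one point requiring a line of justification is that \cite[Theorem 4.1]{BCM}, although stated in \cite{BCM} for compact normal K\"ahler spaces, applies unchanged here because its hypotheses are phrased purely in terms of $\tfrac{1}{A_p}\log P_p$, condition (C), and the summability of $C_pA_p^{-\nu}$ — none of which involve the ambient metric being K\"ahler — so the only place the K\"ahler hypothesis entered in \cite{BCM} was in establishing $\tfrac{1}{A_p}\log P_p\to0$, which we now supply via Theorem~\ref{T:mt1} instead.
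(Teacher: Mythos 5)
Your proposal matches the paper's own argument: the paper derives Theorem~\ref{T:zeros} precisely by invoking the general equidistribution result \cite[Theorem 4.1]{BCM} together with Theorem~\ref{T:mt1}, observing that the hypotheses of that result are metric-independent and are now supplied by Theorem~\ref{T:mt1}(i). Your additional verifications (finite-dimensionality of $H^0_{(2)}(X,L_p)$, the Lelong--Poincar\'e step across $X_\sing$) are consistent with what the paper leaves implicit, so the proof is correct and essentially the same.
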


We refer to \cite{BCM,BCHM} for general classes of 
measures $\sigma_p$ that satisfy condition (C), including Gaussians, 
Fubini-Study volumes, and area measure of spheres. 
Note that if the measures $\sigma_p$ verify (C) with constants 
$C_p=\Gamma_\nu$ independent of $p$ 
(like the Gaussians and the Fubini-Study volumes) 
then the hypothesis of Theorem \ref{T:zeros} becomes 
$\sum_{p=1}^{\infty}A_p^{-\nu}<\infty$.

\smallskip

We close the paper with some remarks of Moishezon manifolds.
By a theorem of Moishezon, a Moishezon manifold
is projective if and only if it carries a K\"ahler metric, see 
\cite{Moi66} and \cite[Theorem 2.2.26]{MM07}.
Moreover, any Moishezon manifold of dimension two
is projective, by Theorem \ref{T:Moi}. Indeed, in dimension two
we can blow up only points and the blow-up $\widehat{X}$
at a point of a compact manifold
$X$ is projective if and only if $X$ is projective.
Hence non-projective Moishezon manifolds
have dimension greater than two.
The first example of this kind was obtained by Hironaka
in his thesis (1961) and is described in \cite[Appendix B, Example 3.4.1]{Ha77}.
It's a manifold which contains a curve which is homologous to zero,
which is impossible on a K\"ahler manifold.
Further examples can be found in \cite{An99,BV,Ko91,Pet95}, 
see also \cite[Section 2.3.4]{MM07}.


\end{document}